\newtheorem{thm}{Theorem}[section]
\newtheorem{obe}[thm]{Remark}
\title{\bf On duality principles for non-convex variational models applied to a Ginzburg-Landau type equation }
\author{ Fabio Silva Botelho}
\begin{document}

\maketitle
\abstract{ This article develops a duality principle applicable to a large class of variational problems.

Firstly, we apply the results to a Ginzburg-Landau type model. In a second step, we develop another duality principle and related primal dual variational formulation and such an approach includes optimality conditions which guarantee zero duality gap between the primal and dual formulations.

We emphasize in both cases the dual variational formulations obtained have  large regions of concavity about the critical points in question.}

\section{Introduction}
 About the year 1950 Ginzburg and Landau introduced a theory to model the super-conducting behavior of some types of materials below a critical temperature $T_c$,
which depends on the material in question. They postulated the free density energy may be written close to $T_c$ as
$$F_s(T)=F_n(T)+\frac{\hbar}{4m}\int_\Omega |\nabla \psi|^2_2 \;dx+\frac{\alpha(T)}{4}\int_\Omega |\psi|^4\;dx-\frac{\beta(T)}{2}\int_\Omega |\psi|^2\;dx,$$
where $\psi$ is a complex parameter, $F_n(T)$ and $F_s(T)$ are the normal and super-conducting free energy densities, respectively (see \cite{100, 101}
 for details).
Here $\Omega \subset \mathbb{R}^3$ denotes the super-conducting sample with a boundary denoted by $\partial \Omega=\Gamma.$ The complex function $\psi \in W^{1,2}(\Omega; \mathbb{C})$ is intended to minimize
$F_s(T)$ for a fixed temperature $T$.

Denoting $\alpha(T)$ and $\beta(T)$ simply by $\alpha$ and $\beta$,  the corresponding Euler-Lagrange equations are given by:
\begin{equation} \left\{
\begin{array}{ll}
 -\frac{\hbar}{2m}\nabla^2 \psi+\alpha|\psi|^2\psi-\beta\psi=0, & \text{ in } \Omega
 \\ \\
 \frac{\partial {\psi}}{\partial \textbf{n}}=0, &\text{ on } \partial\Omega.\end{array} \right.\end{equation}
This last system of equations is well known as the Ginzburg-Landau (G-L) one in the absence of a magnetic field and respective potential.
\begin{obe} For an open bounded subset $\Omega \subset \mathbb{R}^3$, we denote the $L^2(\Omega)$ norm by $\| \cdot \|_{L^2(\Omega)}$ or simply by $\| \cdot \|_2$.
Similar remark is valid for the $L^2(\Omega;\mathbb{R}^3)$ norm, which is denoted by $\|\cdot\|_{L^2(\Omega;\mathbb{R}^3)}$ or simply by $\| \cdot \|_2$, when its meaning is clear.
 On the other hand, by $| \cdot |_2$ we denote the standard Euclidean norm in $\mathbb{R}^3$ or $\mathbb{C}^3$.

Moreover derivatives are always understood in the distributional sense. Also, by a regular Lipschitzian boundary $\partial \Omega=\Gamma$ of $\Omega$, we mean regularity enough  so that the standard Sobolev Imbedding  theorems, the trace theorem and Gauss-Green formulas of integration by parts to hold. Details about such results may be found in \cite{1}.

Finally, in general $\delta F(u,v)$ will denote the Fr\'{e}chet derivative of the functional $F(u,v)$ at (u,v),  $$\delta_u F(u,v) \text{ or }\frac{\partial F(u,v)}{\partial u}$$ denotes the first Fr\'{e}chet derivative of $F$ relating the variable $u$
and $$\delta^2 F_{u,v}(u,v) \text{ or } \frac{\partial^2 F(u,v)}{\partial u \partial v}$$ denotes the second one relating the variables $u$ and $v$, always at $(u,v).$
\end{obe}

\section{The main result}

In this section we develop a duality principle for a Ginzburg-Landau type system. Such  results are a  generalization of some ideas  originally found in \cite{29} and \cite{12a}.

Similar problems are addressed in \cite{12}.

At this point we highlight such a duality principle is for  specific critical points. We start with the simpler real case.
\begin{thm}Let $\Omega \subset \mathbb{R}^3$ be an open, bounded and connected set with a regular (Lipschitzian) boundary
denoted by $\partial \Omega.$ Consider the functional $J:U \rightarrow \mathbb{R}$ given by
\begin{eqnarray}J(u)&=&\frac{\gamma}{2} \int_\Omega \nabla u \cdot \nabla u\;dx \nonumber \\ &&+\frac{\alpha}{2}\int_\Omega (u^2-\beta)^2\;dx,
\end{eqnarray}
where $dx=dx_1dx_2dx_3,$ $\gamma>0,\; \alpha>0$ and  $\beta>0$,
$$U=W^{1,2}(\Omega),$$ and $$Y^*=L^2(\Omega).$$
Let $\tilde{J}^*:Y^* \times Y^* \rightarrow \mathbb{R}$ be defined by
\begin{eqnarray}
&& \tilde{J}^*(v_0^*,z^*)
\nonumber \\ &=&\frac{1}{2}\int_\Omega \frac{(z^*)^2}{-2v_0^*+\varepsilon}\;dx \nonumber \\ &&
-\frac{\gamma}{2}\int_\Omega \left|\nabla\left(\frac{z^*}{-2v_0^*+\varepsilon}\right)\right|^2_2\;dx
\nonumber \\ &&-\frac{1}{2}\int_\Omega \frac{\left|\gamma\nabla^{2}\left(\frac{z^*}{-2 v_0^*+\varepsilon}\right)+z^*\right|^2}{\varepsilon}\;dx \nonumber \\ &&-\frac{1}{2 \alpha} \int_\Omega (v_0^*)^2\;dx
-\beta \int_\Omega v_0^*\;dx,
\end{eqnarray}

Let $u_0 \in U$ be such that $$\delta J(u_0)=0.$$

Defining,
$$\tilde{v}_0^*=\alpha(u_0^2-\beta),$$
$$\tilde{z}^*=(-2\tilde{v}_0^*+\varepsilon)u_0,$$

Suppose, $\tilde{v}_0^* \in B,$
where $$B=\{v^*_0 \in Y^*\;:\; -2v^*_0+\varepsilon > \varepsilon^{1/8}, \text{ in } \Omega\},$$
and $\varepsilon>0$ is a small parameter.

Under such hypotheses, $(\tilde{v}_0^*,\tilde{z}^*) \in Y^* \times Y^*$ is such that
$$\delta \tilde{J}^*(\tilde{v}_0^*, \tilde{z}^*)=0.$$

Moreover, assuming from now an appropriate discretized problem version, denoting
$$L(v_0^*)z^*=\gamma\nabla^{2}\left(\frac{z^*}{-2 v_0^*+\varepsilon}\right)$$ suppose also
$$(L(\tilde{v}_0^*)+I_d)^2 > \varepsilon^{1/4},\text{ in }\Omega$$ where $I_d$ denotes the identity operator, and
$$\left(\frac{\partial L(\tilde{v}^*_0)}{\partial v_0^*} \tilde{z}^*\right)^2> \varepsilon^{1/4},  \text{ in }\Omega.$$

Under such hypotheses, there exists $r>0$ such that

\begin{eqnarray}
J(u_0)&=& \min_{u \in U} \left\{J(u)+\frac{1}{2}\int_\Omega K(\tilde{v}_0^*)(u-u_0)^2\;dx\right\} \nonumber \\ &=&
\max_{(v^*_0,z^*)\in B_r(\tilde{v}_0^*,\tilde{z}^*)}\{\tilde{J}^*(v_0^*,z^*)\} \nonumber \\ &=&
\tilde{J}^*(\tilde{v}_0^*, \tilde{z}^*),
\end{eqnarray}
where, $$K(\tilde{v}_0^*)=-2\tilde{v}_0^*+\varepsilon,$$
and $B_r(\tilde{v}_0^*,\tilde{z}^*)$ denotes the open ball of center $(\tilde{v}_0^*, \tilde{z}^*)$ and radius $r>0$.

\end{thm}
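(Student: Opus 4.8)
The plan is to organize everything around the reconstruction relation $u=z^*/(-2v_0^*+\varepsilon)$, which lets each term of $\tilde{J}^*$ be read as a functional of the recovered primal field. First I would record the Euler--Lagrange equation behind $\delta J(u_0)=0$, namely $-\gamma\nabla^2 u_0+2\tilde{v}_0^* u_0=0$ in $\Omega$ with $\partial u_0/\partial\mathbf{n}=0$ on $\partial\Omega$, where $\tilde{v}_0^*=\alpha(u_0^2-\beta)$. Substituting $\tilde{z}^*=(-2\tilde{v}_0^*+\varepsilon)u_0$ then gives the single identity on which the whole argument turns: $\gamma\nabla^2 u_0+\tilde{z}^*=\varepsilon u_0$. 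This is what renders the $1/\varepsilon$ penalty term harmless at the critical point.

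For the stationarity claim $\delta\tilde{J}^*(\tilde{v}_0^*,\tilde{z}^*)=0$ I would compute the two partial Fr\'echet derivatives directly. Writing $w=-2v_0^*+\varepsilon$, the variations $\partial u/\partial z^*=1/w$ and $\partial u/\partial v_0^*=2u/w$ feed the chain rule; inserting the residual identity turns the a priori $O(1/\varepsilon)$ penalty contribution into the finite expression $-\int_\Omega u_0(\gamma\nabla^2(\cdot)+(\cdot))\,dx$, and one or two integrations by parts (legitimate because the Neumann condition kills every boundary term) cancel it against the derivative of the Dirichlet term, while the first-term derivative absorbs the remaining linear piece. In the $v_0^*$ direction the residual algebra reduces, via $\tilde{v}_0^*/\alpha=u_0^2-\beta$, to $\int_\Omega(u_0^2-(u_0^2-\beta)-\beta)\xi\,dx=0$. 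This settles stationarity.

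Next I would prove the global primal identity, which I regard as the most instructive step. The convex map $\phi\mapsto\frac{\alpha}{2}(\phi-\beta)^2$ supplies the supporting hyperplane $\frac{\alpha}{2}(u^2-\beta)^2\ge\frac{\alpha}{2}(u_0^2-\beta)^2+\tilde{v}_0^*(u^2-u_0^2)$, with equality at $u_0$. Adding the penalty $\frac12\int_\Omega K(\tilde{v}_0^*)(u-u_0)^2\,dx$ with $K=-2\tilde{v}_0^*+\varepsilon$, the troublesome $+\tilde{v}_0^* u^2$ produced by the hyperplane is exactly annihilated by the $-2\tilde{v}_0^*$ inside $K$; replacing $2\tilde{v}_0^* u_0$ by $\gamma\nabla^2 u_0$ through the Euler--Lagrange equation and integrating by parts, the entire lower bound telescopes into $\frac{\gamma}{2}\int_\Omega|\nabla u_0|^2\,dx+\frac{\gamma}{2}\int_\Omega|\nabla(u-u_0)|^2\,dx+\frac{\varepsilon}{2}\int_\Omega(u-u_0)^2\,dx+\frac{\alpha}{2}\int_\Omega(u_0^2-\beta)^2\,dx$. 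This is manifestly minimized at $u=u_0$ with value $J(u_0)$, which proves the first equality. A parallel but purely algebraic computation, again using $\gamma\nabla^2 u_0+\tilde{z}^*=\varepsilon u_0$ to see that the first three integrals of $\tilde{J}^*$ sum to zero at the critical point, delivers the value identity $\tilde{J}^*(\tilde{v}_0^*,\tilde{z}^*)=J(u_0)$.

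It remains to upgrade the critical point of $\tilde{J}^*$ to a strict local maximum on $B_r(\tilde{v}_0^*,\tilde{z}^*)$, and this I expect to be the main obstacle. The plan is to show the Hessian $\delta^2\tilde{J}^*(\tilde{v}_0^*,\tilde{z}^*)$ is negative definite. The condition $\tilde{v}_0^*\in B$ keeps $-2v_0^*+\varepsilon>\varepsilon^{1/8}>0$, making the Dirichlet term and the $(z^*)^2/(-2v_0^*+\varepsilon)$ term concave; the delicate object is the penalty $-\frac{1}{2\varepsilon}\int_\Omega|(L(v_0^*)+I_d)z^*|^2\,dx$, whose second variation carries the two structural factors $(L(\tilde{v}_0^*)+I_d)^2$ (differentiating in $z^*$) and $(\partial_{v_0^*}L(\tilde{v}_0^*)\tilde{z}^*)^2$ (differentiating in $v_0^*$). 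The hypotheses $(L(\tilde{v}_0^*)+I_d)^2>\varepsilon^{1/4}$ and $(\partial_{v_0^*}L(\tilde{v}_0^*)\tilde{z}^*)^2>\varepsilon^{1/4}$ are precisely calibrated so that these positive quantities, weighted by $1/\varepsilon$, reach order $\varepsilon^{-3/4}$ and thereby dominate both the off-diagonal penalty coupling and the remaining $O(1)$ indefinite contributions for $\varepsilon$ small. Passing to the stated discretized version, where $L$ and $\partial_{v_0^*}L$ act as genuine matrices, I would diagonalize the resulting quadratic form to read off negative definiteness at the center, then invoke continuity of the Hessian to extend strict concavity to a ball of some radius $r>0$. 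Concavity together with stationarity yields the local maximum, and the value identity closes the chain of equalities.
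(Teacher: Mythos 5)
Your proposal is correct at the paper's own level of rigor, but it reaches the conclusions by a genuinely different route for everything except the final Hessian step. The paper embeds the theorem in a Toland-type duality framework: it decomposes $J=G_0+(G_1)_K-F$, computes the Fenchel conjugates $F^*$, $G_0^*$, $(G_1)_K^*$, assembles a three-variable dual functional $J_K^*(v_0^*,v_1^*,z^*)$, eliminates $v_1^*$ by stationarity to produce $\tilde{J}^*$, and then derives both the stationarity of $\tilde{J}^*$ and the value identity $J(u_0)=\tilde{J}^*(\tilde{v}_0^*,\tilde{z}^*)$ from abstract Legendre-transform properties, obtaining the inequality $J(u_0)\le J(u)+\frac{1}{2}\int_\Omega K(u-u_0)^2\,dx$ by a Fenchel--Young/sup-over-$v_0^*$ manipulation. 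You bypass all of that machinery: your residual identity $\gamma\nabla^2 u_0+\tilde{z}^*=\varepsilon u_0$ (equivalently $(L(\tilde{v}_0^*)+I_d)\tilde{z}^*=\varepsilon u_0$) is exactly what makes the direct computations close, your stationarity check and the cancellation $\int_\Omega\bigl(u_0^2-(u_0^2-\beta)-\beta\bigr)\xi\,dx=0$ are correct, the ``first three integrals sum to zero'' claim is verified by multiplying the Euler--Lagrange equation by $u_0$ and integrating by parts, and your supporting-hyperplane argument is the pointwise content of the paper's sup-over-$v_0^*$ step, but it buys you something the paper does not state: the explicit remainder $\frac{\gamma}{2}\int_\Omega|\nabla(u-u_0)|^2\,dx+\frac{\varepsilon}{2}\int_\Omega(u-u_0)^2\,dx$, which quantifies the minimum and shows it is attained only at $u_0$. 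The price of your route is that it is tied to this specific functional, whereas the paper's conjugate-function scaffolding is what it advertises as generalizable. Two caveats: your parenthetical that the positivity $-2v_0^*+\varepsilon>\varepsilon^{1/8}$ makes the term $\frac{1}{2}\int_\Omega(z^*)^2/(-2v_0^*+\varepsilon)\,dx$ concave is wrong --- that term is a perspective-type functional, hence \emph{convex} in $(z^*,v_0^*)$ when $-2v_0^*+\varepsilon>0$ --- but this is inconsequential since its Hessian contribution is $O(\varepsilon^{-1/8})$ and is dominated by the $O(\varepsilon^{-3/4})$ penalty terms, which is precisely the domination argument you (and the paper) make; and your repeated integrations by parts in the stationarity computation leave boundary terms such as $\int_{\partial\Omega}u_0\,\partial_{\mathbf{n}}(\eta/w)\,d\Gamma$ that the Neumann condition on $u_0$ alone does not kill, though this is exactly the looseness the paper itself resolves by passing to the discretized setting, so you are no worse off than the original.
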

\begin{proof}
Denoting $v^*=(v_0^*,v_1^*),$ let  $J_K: [Y^*]^3 \rightarrow \mathbb{R}$ be the functional such that
\begin{eqnarray}
J_K(v^*,z^*)&=& F^*(z^*)-G_0^*(z^*,v_1^*)-(G_1)_K^*(v_1^*,v_0^*)
\nonumber \\ &=&\frac{1}{2} \int_\Omega \frac{(z^*)^2}{K}\;dx\nonumber \\ &&-\frac{1}{2\gamma} \int_\Omega (z^*-v_1^*)[(-\nabla^2)^{-1}(z^*-v_1^*)]\;dx
\nonumber \\ &&-\frac{1}{2} \int_\Omega \frac{(v_1^*)^2}{2v_0^*+K}\;dx-\frac{1}{2 \alpha}\int_\Omega (v_0^*)^2\;dx-\beta \int_\Omega v_0^*\;dx.
\end{eqnarray}
Here we denote,

$$J(u)=G_0(u)+(G_1)_K(u,0)-F(u),$$ where
$$G_0(u)=\frac{\gamma}{2}\int_\Omega \nabla u \cdot \nabla u\;dx,$$

$$(G_1)_K(u,v)=\frac{\alpha}{2}\int_\Omega (u^2-\beta+v)^2+\frac{1}{2}\int_\Omega K u^2\;dx,$$
and
$$F(u)=\frac{1}{2}\int_\Omega K u^2\;dx,$$
where $K \in L^2(\Omega)$ is a function to be specified in the next lines.

Moreover, \begin{eqnarray}
G^*_0(z^*,v_1^*)&=&\sup_{u \in U}\{\langle z^*-v_1^*, u\rangle_{L^2}-G_0(u)\} \nonumber \\ &=&
\frac{1}{2\gamma}\int_\Omega (z^*-v_1^*)[(\nabla^2)^{-1}(z^*-v_1^*)]\;dx,\end{eqnarray}
\begin{eqnarray}
G^*_1(v_1^*,v_0^*)&=&\sup_{u \in U}\{\langle v_1^*, u\rangle_{L^2}+\langle v_0^*,v\rangle_{L^2}-(G_1)_K(u,v)\} \nonumber \\ &=&
\frac{1}{2}\int_\Omega \frac{(v_1^*)^2}{2v_0^*+K} \;dx+\frac{1}{2\alpha} \int_\Omega (v_0^*)^2\;dx+\beta \int_\Omega v_0^*\;dx,\end{eqnarray}
if $$2\;v_0^*+K>0, \text{ in } \Omega.$$
Also,
\begin{eqnarray}F^*(z^*)&=&\sup_{u \in U}\{ \langle z^*,u\rangle_{L^2}-F(u)\} \nonumber \\ &=& \frac{1}{2}\int_\Omega \frac{(z^*)^2}{K}\;dx.\end{eqnarray}

In particular for $K=-2v_0^*+\varepsilon,$ we obtain
\begin{eqnarray}
J_K^*(v^*,z^*)&=& \hat{J}^*(v_0^*,v_1^*,z^*) \nonumber \\ &=& \frac{1}{2}\int_\Omega \frac{(z^*)^2}{-2v_0^*+\varepsilon}\;dx
\nonumber \\ &&-\frac{1}{2\gamma}\int_\Omega(z^*-v_1^*)[(-\nabla^2)^{-1}(z^*-v_1^*)]\;dx \nonumber \\ &&-\frac{1}{2}\int_\Omega \frac{(v_1^*)^2}{\varepsilon}-\frac{1}{2 \alpha} \int_\Omega (v_0^*)^2\;dx
-\beta \int_\Omega v_0^*\;dx.
\end{eqnarray}

Let $\hat{v}_1^*$ be the solution of equation
$$\delta_{z^*}\hat{J}^*(v_0^*,\hat{v}_1^*,z^*)=0,$$ so that
$$\hat{v}_1^*=\gamma(\nabla^2)\left( \frac{z^*}{-2v_0^*+\varepsilon} \right)+z^*.$$

So, from such a result we obtain,

\begin{eqnarray}
J_K^*(v^*,z^*)&=& \hat{J}^*(v_0^*,\hat{v}_1^*,z^*) \nonumber \\ &=& \tilde{J}^*(v_0^*,z^*)
\nonumber \\ &=&\frac{1}{2}\int_\Omega \frac{(z^*)^2}{-2v_0^*+\varepsilon}\;dx \nonumber \\ &&
-\frac{\gamma}{2}\int_\Omega \left|\nabla\left(\frac{z^*}{-2v_0^*+\varepsilon}\right)\right|^2_2\;dx
\nonumber \\ &&-\frac{1}{2}\int_\Omega \frac{\left|\gamma\nabla^{2}\left(\frac{z^*}{-2 v_0^*+\varepsilon}\right)+z^*\right|^2}{\varepsilon}\;dx \nonumber \\ &&-\frac{1}{2 \alpha} \int_\Omega (v_0^*)^2\;dx
-\beta \int_\Omega v_0^*\;dx,
\end{eqnarray}
where, as indicated above, we have denoted
$$K=K(v_0^*)=-2v_0^*+\varepsilon.$$
For $$\hat{v}_1^*=\gamma \nabla^2\left(\frac{\tilde{z}_0^*}{-2\tilde{v}_0^*+\varepsilon}\right)+\tilde{z}_0^*,$$ from $$\delta J(u_0)=0$$
and from the standard Legendre transform proprieties, we have
$$\delta J_K^*(\hat{v}_1^*,\tilde{v}_0^*, \tilde{z}^*)=0.$$

Hence,
\begin{eqnarray}
\frac{\partial \tilde{J}^*(\tilde{v}_0^*,\tilde{z}^*)}{\partial z^*} &=& \frac{\partial J_K^*(\hat{v}_1^*,\tilde{v}_0^*,\tilde{z}^*)}{\partial z^*}
\nonumber \\ &&+\frac{\partial J_K^*(\hat{v}_1^*,\tilde{v}_0^*,\tilde{z}^*)}{\partial v_1^*}\frac{\partial \hat{v}_1^*}{\partial z^*} \nonumber \\
&=& \frac{\partial J_K^*(\hat{v}_1^*,\tilde{v}_0^*,\tilde{z}^*)}{\partial z^*}
\nonumber \\ &=& 0.
\end{eqnarray}

Also,
\begin{eqnarray}
\frac{\partial \tilde{J}^*(\tilde{v}_0^*,\tilde{z}^*)}{\partial v_0^*} &=& \frac{\partial J_K^*(\hat{v}_1^*,\tilde{v}_0^*,\tilde{z}^*)}{\partial v_0^*}
\nonumber \\ &&+\frac{\partial J_K^*(\hat{v}_1^*,\tilde{v}_0^*,\tilde{z}^*)}{\partial v_1^*}\frac{\partial \hat{v}_1^*}{\partial v_0^*} \nonumber \\&&+\frac{\partial J_K^*(\hat{v}_1^*,\tilde{v}_0^*,\tilde{z}^*)}{\partial K}\frac{\partial K(\tilde{v}_0^*)}{\partial v_0^*} \nonumber \\
&=& \frac{\partial J_K^*(\hat{v}_1^*,\tilde{v}_0^*,\tilde{z}^*)}{\partial v_0^*}
\nonumber \\ &=&0,
\end{eqnarray}
since \begin{eqnarray}
\frac{\partial J_K^*(\hat{v}_1^*,\tilde{v}_0^*,\tilde{z}^*)}{\partial K}&=& -\frac{1}{2}\frac{(\tilde{z}^*)^2}{K^2}+\frac{1}{2}\frac{(\hat{v}_1^*)^2}{
(-2\tilde{v}_0^*+K)^2} \nonumber \\ &=& -\frac{u_0^2}{2}+\frac{u_0^2}{2}, \text{ in } \Omega.
\end{eqnarray}

From such results and also from the Legendre transform properties, we obtain
\begin{eqnarray}
J(u_0)&=& \tilde{J}^*(\tilde{v}_0^*,\tilde{z}^*) \nonumber \\ &=&
J_K^*(\tilde{v}_1^*,\tilde{v}_0^*, \tilde{z}^*) \nonumber \\ &=&
\frac{1}{2}\int_\Omega \frac{(\tilde{z}^*)^2}{K}\;dx
-\frac{1}{2\gamma}\int_\Omega (\tilde{z}^*-\hat{v}_1^*)[(-\nabla^2)^{-1}(z^*-\hat{v}_1^*)]\;dx
\nonumber \\ &&-\frac{1}{2} \int_\Omega \frac{(\hat{v}_1^*)^2}{2\tilde{v}_0^*+K}\;dx-\frac{1}{2 \alpha} (\tilde{v}_0^*)^2\;dx-\beta \int_\Omega \tilde{v}_0^*\;dx.
\end{eqnarray}
Thus,
\begin{eqnarray}
J(u_0)&=&\tilde{J}^*(\tilde{v}_0^*, \tilde{z}^*) \nonumber \\ &\leq&
\frac{1}{2}\int_\Omega \frac{(\tilde{z}^*)^2}{K}\;dx
\nonumber \\ && -\langle u,\tilde{z}^* \rangle_{L^2}+\frac{\gamma}{2} \int_\Omega \nabla u \cdot \nabla u\;dx
\nonumber \\ && +\frac{1}{2}\int_\Omega (2\tilde{v}_0^*+K)u^2\;dx -\frac{1}{2 \alpha} \int_\Omega (\tilde{v}_0^*)^2\;dx
\nonumber \\ && -\beta \int_\Omega \tilde{v}_0^*\;dx \nonumber \\ &\leq&
\frac{1}{2}\int_\Omega \frac{(\tilde{z}^*)^2}{K}\;dx
\nonumber \\ && -\langle u,\tilde{z}^* \rangle_{L^2}+\frac{\gamma}{2} \int_\Omega \nabla u \cdot \nabla u\;dx
\nonumber \\ && + \sup_{ v_0^* \in Y^*}\left\{\frac{1}{2}\int_\Omega (2v_0^*+K)u^2\;dx -\frac{1}{2 \alpha} \int_\Omega (v_0^*)^2\;dx
 -\beta \int_\Omega v_0^*\;dx\right\} \nonumber \\ &=&
\frac{1}{2}\int_\Omega \frac{(\tilde{z}^*)^2}{K}\;dx
\nonumber \\ && -\langle u,\tilde{z}^* \rangle_{L^2}+\frac{\gamma}{2} \int_\Omega \nabla u \cdot \nabla u\;dx
\nonumber \\ && + \frac{\alpha}{2}\int_\Omega (u^2-\beta)^2\;dx+\frac{1}{2}\int_\Omega K u^2\;dx.
\end{eqnarray}

From this,
 \begin{eqnarray}\label{pr12}
J(u_0)&=& \tilde{J}^*(\tilde{v}_0^*,\tilde{z}^*) \nonumber \\ &\leq&
\frac{1}{2}\int_\Omega Ku_0^2\;dx
\nonumber \\ && -\langle u,Ku_0 \rangle_{L^2}+\frac{\gamma}{2} \int_\Omega \nabla u \cdot \nabla u\;dx
\nonumber \\ && + \frac{\alpha}{2}\int_\Omega (u^2-\beta)^2\;dx+\frac{1}{2}\int_\Omega K u^2\;dx \nonumber \\ &=&
\frac{1}{2} \int_\Omega K(u-u_0)^2\;dx+J(u),\; \forall u \in U.
\end{eqnarray}
Moreover defining
\begin{eqnarray}J_1^*(v_0^*,z^*)&=&\frac{1}{2}\int_\Omega \frac{\left|\gamma \nabla^{2}\left(\frac{z^*}{-2 v_0^*+\varepsilon}\right)+z^*\right|^2}{\varepsilon}\;dx
\nonumber \\ &=& \frac{1}{2\varepsilon}\int_\Omega (L(v_0^*)z^*+z^*)^2\;dx
\nonumber \\ &=& \frac{1}{2\varepsilon} \int_\Omega [(L(v_0^*)+I_d)z^*]^2\;dx
\end{eqnarray}
where, as above indicated, we have denoted
$$\gamma \nabla^2\left(\frac{z^*}{-2 v_0^*+\varepsilon} \right)+z^*=L(v_0^*)z^*+z^*,$$
we have
$$\frac{\partial^2 J_1^*(\tilde{v}_0^*,\tilde{z}^*)}{\partial (z^*)^2} = \frac{(L(\tilde{v}_0^*)+I_d)^2}{\varepsilon} > \mathcal{O}\left(\frac{1}{\varepsilon^{3/4}} \right).$$

On the other hand,

\begin{eqnarray}
\frac{\partial^2J_1^*(\tilde{v}_0^*,\tilde{z}^*)}{\partial (z^*)\partial v_0^*} &=&\left( \frac{(L(\tilde{v}_0^*)+I_d)\tilde{z}^*}{\varepsilon}\right)\frac{\partial L(\tilde{v}_0^*)}{\partial v_0^*} \nonumber \\ &=&  u_0 \frac{\partial L(\tilde{v}_0^*)}{\partial v_0^*},
\end{eqnarray}
where $$u_0=\frac{(L(\tilde{v}_0^*)+I_d)\tilde{z}^*}{\varepsilon}.$$

Also,
\begin{eqnarray}
\frac{\partial^2 J^*_1(\tilde{v}_0^*,\tilde{z}^*)}{\partial (v_0^*)^2}&=& \frac{  \left[\frac{\partial L(\tilde{v}_0^*)}{\partial v^*_0} \tilde{z}^*\right]^2 }{\varepsilon} +\left(\frac{ [L(\tilde{v}_0^*)+I_d]\tilde{z}^*}{\varepsilon}\right)
\frac{\partial^2 L(\tilde{v}_0^*)}{\partial (v_0^*)^2}\tilde{z}^* \nonumber \\ &=&\frac{  \left[\frac{\partial L(\tilde{v}_0^*)}{\partial v^*_0}\tilde{z}^*\right]^2}{\varepsilon} +u_0\frac{\partial^2 L(\tilde{v}_0^*)}{\partial (v_0^*)^2}\tilde{z}^* > \mathcal{O}\left(\frac{1}{\varepsilon^{3/4}}\right).
\end{eqnarray}
Hence,
$$\det\{\delta^2_{v_0^*,z^*} J^*_1(\tilde{v}_0^*,\tilde{z}^*)\} > \mathcal{O}\left(\frac{1}{\varepsilon^{3/2}}\right),$$
so that from this we may infer that $(\tilde{v}_0^*,\tilde{z}^*)$ is a point of local maximum for $$\tilde{J}(\tilde{v}_0^*,\tilde{z}^*).$$

Therefore, there exists $r>0$ such that
$$J^*(\tilde{v}_0^*, \tilde{z}^*)=\max_{(v_0^*,z^*) \in B_r(\tilde{v}_0^*,\tilde{z}^*)} J^*(v_0^*,z^*).$$

From this and (\ref{pr12}), we obtain
\begin{eqnarray}
J(u_0)&=& \min_{u \in U} \left\{J(u)+\frac{1}{2}\int_\Omega K(\tilde{v}_0^*)(u-u_0)^2\;dx\right\} \nonumber \\ &=&
\max_{(v^*_0,z^*)\in B_r(\tilde{v}_0^*,\tilde{z}^*)}\{\tilde{J}(v_0^*,z^*)\} \nonumber \\ &=&
\tilde{J}(\tilde{v}_0^*, \tilde{z}^*),
\end{eqnarray}
where, $$K(\tilde{v}_0^*)=-2\tilde{v}_0^*+\varepsilon.$$

The proof is complete.
\end{proof}
\section{ A primal dual variational formulation and related duality principle}
In this section we develop a new duality principle. The result is summarized by the following theorem.
\begin{thm}Let $\Omega \subset \mathbb{R}^3$ be an open, bounded and connect set with a regular (Lipschitzian) boundary
denoted by $\partial \Omega.$ Consider the functional $J:U \rightarrow \mathbb{R}$ defined by
\begin{eqnarray} J(u)&=& \frac{\gamma}{2} \int_\Omega \nabla u \cdot \nabla u\;dx \nonumber \\ &&+
\frac{\alpha}{2}\int_\Omega(u^2-\beta)^2\;dx - \langle u,f \rangle_{L^2},\end{eqnarray}
where $$U=W_0^{1,2}(\Omega)$$ and $f \in L^2(\Omega).$

Assuming from now and on a discretized finite dimensional problem version, suppose $u_0 \in U$ is such that
$$\delta J(u_0)= \mathbf{0}.$$

Let $\varepsilon>0$ be a small real  parameter. Define $$\tilde{v}_0^*=\alpha(u_0^2-\beta) \in Y^*=L^2(\Omega)$$ and
assume $$(-\nabla^2+(2\tilde{v}_0^*-\varepsilon) I_d)^2 > \sqrt{\varepsilon}, \text{ in } \Omega.$$

Suppose also $f\; u_0\geq 0, \text{ in } \Omega.$

Under such hypotheses, there exists $r>0$ such that

\begin{eqnarray}J(u_0)&=&\min_{u \in U}\left\{J(u)+\frac{1}{2}\int_\Omega K(\tilde{v}_0^*)(u-u_0)^2\;dx \right\}
\nonumber \\ &=& \max_{(v_0^*, \hat{u}) \in B_r(\tilde{v}_0^*,u_0)} J_3^*(v_0^*,\hat{u}) \nonumber \\ &=&
J_3^*(\tilde{v}_0^*, u_0),
\end{eqnarray}
where
\begin{eqnarray}
J_3^*(v_0^*,\hat{u})&=& -\frac{\gamma}{2} \int_\Omega \nabla \hat{u} \cdot \nabla \hat{u} \; dx-\frac{1}{2}\int_\Omega (2v_0^*- \varepsilon)\hat{u}^2\;dx
\nonumber \\ &&-\frac{1}{2 \varepsilon} \int_\Omega (\gamma \nabla^2 \hat{u}+(-2v_0^*+\varepsilon) \hat{u}+f)^2\;dx
\nonumber \\ &&-\frac{1}{2 \alpha} \int_\Omega (v_0^*)^2\;dx-\beta\int_\Omega v_0^*\;dx
\end{eqnarray}
\end{thm}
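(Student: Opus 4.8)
The plan is to reproduce the Legendre--Fenchel construction of Theorem 2.1, now adapted to the primal--dual setting in which the field $\hat u$ is kept rather than dualized and the forcing term is carried along. First I would record the splitting $J(u)=G_0(u)+(G_1)_K(u,0)-F(u)$ with $G_0(u)=\frac{\gamma}{2}\int_\Omega \nabla u\cdot\nabla u\,dx$, with $(G_1)_K(u,v)=\frac{\alpha}{2}\int_\Omega(u^2-\beta+v)^2\,dx+\frac12\int_\Omega Ku^2\,dx$, and with $F(u)=\frac12\int_\Omega Ku^2\,dx+\langle u,f\rangle_{L^2}$, so that the forcing sits inside $F$. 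Dualizing only the quartic through $v_0^*$ (its partial Legendre transform supplies $-\frac{1}{2\alpha}\int_\Omega(v_0^*)^2\,dx-\beta\int_\Omega v_0^*\,dx$) and penalizing the perturbed equilibrium residual $\gamma\nabla^2\hat u+(-2v_0^*+\varepsilon)\hat u+f$ with weight $1/\varepsilon$ reproduces $J_3^*$ once $K=-2v_0^*+\varepsilon$.

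Next I would establish the stationarity and the value match. From $\delta J(u_0)=\mathbf 0$ comes the Euler--Lagrange identity $-\gamma\nabla^2 u_0+2\tilde v_0^* u_0-f=0$, hence at $(\tilde v_0^*,u_0)$ the residual collapses to $\gamma\nabla^2 u_0+(-2\tilde v_0^*+\varepsilon)u_0+f=\varepsilon u_0$. Using this together with $\tilde v_0^*=\alpha(u_0^2-\beta)$ I would check $\delta_{\hat u}J_3^*(\tilde v_0^*,u_0)=0$ and $\delta_{v_0^*}J_3^*(\tilde v_0^*,u_0)=0$; and, after one integration by parts giving $\gamma\int_\Omega|\nabla u_0|^2\,dx=-2\int_\Omega \tilde v_0^* u_0^2\,dx+\langle u_0,f\rangle_{L^2}$, the value equality $J_3^*(\tilde v_0^*,u_0)=J(u_0)$, in which the penalty contributes exactly $-\frac{\varepsilon}{2}\int_\Omega u_0^2\,dx$ and cancels against the $\varepsilon$ piece of $-\frac12\int_\Omega(2\tilde v_0^*-\varepsilon)u_0^2\,dx$.

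For the primal (minimum) assertion I would run the weak-duality chain of Theorem 2.1: bound each conjugate contribution from above by its defining supremum at an arbitrary test $u\in U$ and take the supremum over $v_0^*$, which reconstructs $\frac{\alpha}{2}\int_\Omega(u^2-\beta)^2\,dx+\frac12\int_\Omega Ku^2\,dx$, arriving at $J(u_0)=J_3^*(\tilde v_0^*,u_0)\le J(u)+\frac12\int_\Omega K(\tilde v_0^*)(u-u_0)^2\,dx$ for every $u$, with equality at $u=u_0$. The hypothesis $f\,u_0\ge 0$ is used precisely at the step where the forcing contribution $\langle u,f\rangle_{L^2}$ is matched against the reconstructed quadratic, guaranteeing that the residual linear term carries the sign needed for the inequality to close.

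Finally, for the dual (maximum) assertion I would compute the Hessian of $J_3^*$ in $(v_0^*,\hat u)$ at $(\tilde v_0^*,u_0)$. The penalty yields the dominant diagonal block $\partial^2_{\hat u}J_3^*=\gamma\nabla^2-(2\tilde v_0^*-\varepsilon)I_d-\frac{1}{\varepsilon}(\gamma\nabla^2+(-2\tilde v_0^*+\varepsilon)I_d)^2$, whose leading part is $-1/\varepsilon$ times an operator square bounded below by the hypothesis on $(-\nabla^2+(2\tilde v_0^*-\varepsilon)I_d)^2$, hence more negative than $-\varepsilon^{-1/2}$ and dominating the $O(1)$ contributions; likewise $\partial^2_{v_0^*}J_3^*=-\frac{4}{\varepsilon}u_0^2-\frac1\alpha<0$. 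The main obstacle is to show the full block is negative definite uniformly on a ball $B_r(\tilde v_0^*,u_0)$, i.e. that the Schur complement keeps its sign, since the mixed derivative carries an $O(1/\varepsilon)$ piece $-2f/\varepsilon+4u_0$; here both the operator hypothesis and $f\,u_0\ge0$ are exactly what dominate this cross term, in the same role played by the conditions $(L+I_d)^2>\varepsilon^{1/4}$ and $(\partial_{v_0^*}L\,\tilde z^*)^2>\varepsilon^{1/4}$ in Theorem 2.1. Local concavity then gives $J_3^*(\tilde v_0^*,u_0)=\max_{B_r}J_3^*$, and combining with the primal inequality yields the asserted three-fold equality.
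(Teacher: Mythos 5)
Your proposal follows essentially the same route as the paper: the same splitting $J=G_0+G_{1K}-F$ with the choice $K=-2v_0^*+\varepsilon$, elimination of $v_1^*$ by stationarity, a substitution turning $(v_0^*,z^*)$ into $(v_0^*,\hat u)$ to produce $J_3^*$, verification of $\delta J_3^*(\tilde v_0^*,u_0)=\mathbf{0}$ from the Euler--Lagrange equation (with the residual collapsing to $\varepsilon u_0$), a Fenchel weak-duality chain for the minimum assertion, and a block-Hessian estimate for the local maximum. Your placement of the forcing inside $F$ (rather than inside $G_{1K}$, as the paper does, where it produces the conjugate term involving $(v_1^*+f)^2/(2v_0^*+K)$) is an immaterial variant: it yields the same $J_3^*$, just via the substitution $\hat u=(z^*-f)/K$ instead of $\hat u = z^*/K$; likewise your direct computation of the value match $J_3^*(\tilde v_0^*,u_0)=J(u_0)$ replaces the paper's appeal to Legendre-transform properties and is correct.

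One point is misstated, though it does not break the argument. You claim $f\,u_0\ge 0$ is ``used precisely'' to close the primal inequality $J(u_0)\le J(u)+\frac{1}{2}\int_\Omega K(\tilde v_0^*)(u-u_0)^2\,dx$. It is not: that chain is pure Fenchel--Young inequality plus the identity $\sup_{v_0^*}\left\{\int_\Omega v_0^*u^2\,dx-\frac{1}{2\alpha}\int_\Omega (v_0^*)^2\,dx-\beta\int_\Omega v_0^*\,dx\right\}=\frac{\alpha}{2}\int_\Omega(u^2-\beta)^2\,dx$, and it closes with no sign condition whatsoever on $f u_0$; the term $-\langle u,f\rangle_{L^2}$ simply passes through and is absorbed into $J(u)$. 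In the paper the hypothesis $f u_0\ge 0$ is invoked only where your final paragraph places it: in the determinant estimate for $\delta^2 J_3^*(\tilde v_0^*,u_0)$, where the mixed derivative $\delta^2_{v_0^*,\hat u}J_3^*=4u_0-2f/\varepsilon$ must be dominated, and the sign condition is what controls the cross term proportional to $u_0 f/\varepsilon$ arising when this is squared. Your remark that the $O(1/\varepsilon)$ size of this mixed term is ``the main obstacle'' is apt --- the paper itself asserts, rather than proves, that the determinant exceeds $\mathcal{O}(1/\sqrt{\varepsilon})$ --- but the attribution of the hypothesis to the primal step should be removed.
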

\begin{proof}
Denote $$G_0(u)= \frac{\gamma}{2} \int_\Omega \nabla u \cdot \nabla u \;dx,$$
$$G_{1K} (u,v)= \frac{\alpha}{2} \int_\Omega (u^2 - \beta+v)^2\;dx+\frac{1}{2} \int_\Omega K u^2\;dx-\langle u,f \rangle_{L^2},$$
$$F(u)=\frac{1}{2} \int_\Omega K u^2\;dx,$$
where $K \in L^2(\Omega)$ will be specified in the next lines.

Observe that
$$J(u)=G_0(u)+G_{1K}(u)-F(u),\; \forall u \in U.$$

Let $J^*_K:[Y^*]^3 \rightarrow \mathbb{R}$ be defined as
$$J_K^*(v^*,z^*)=F^*(z^*)-G_{1K}^*(v_0^*,v_1^*)-G_0^*(v_1^*,z^*),$$
$\forall v^*=(v_0^*,v_1^*) \in [Y^*]^2, \; z^* \in Y^*,$ where
\begin{eqnarray}
F^*(z^*)&=& \sup_{u \in U}\{ \langle u, z^* \rangle_{L^2}-F(u)\} \nonumber \\ &=&
\frac{1}{2} \int_\Omega \frac{(z^*)^2}{K}\;dx. \end{eqnarray}
\begin{eqnarray} G_{1K}^*(v_0^*,v_1^*)&=& \sup_{ u \in U} \{\langle u,v_1^*\rangle_{L^2}+\langle v,v_0^* \rangle_{L^2} \nonumber \\ &&
-G_{1K}(u,v)\} \nonumber \\ &=& -\frac{1}{2} \int_\Omega \frac{(v_1^*+f)^2}{2v_0^*+K} \;dx-\frac{1}{2\alpha} \int_\Omega (v_0^*)^2\;dx \nonumber \\
&& -\beta \int_\Omega v_0^*\;dx,
\end{eqnarray}
if $2v_0^*+K>0, \; \text{ in } \Omega.$

Also
\begin{eqnarray}
G_0^*(v_1^*,z^*)&=& \sup_{u \in U}\{-\langle u,v_1^* \rangle_{L^2}+\langle u,z^* \rangle_{L^2} -G_0(u)\}
\nonumber \\ &=& \frac{1}{2 \gamma} \int_\Omega (z^*-v_1^*)[(-\nabla^2)^{-1}(z^*-v_1^*)]\;dx.
\end{eqnarray}

Let $\hat{v}_1^*$ be such that $$\delta_{z^*} J_K^*(v_0^*, \hat{v}_1^*,z^*)=\mathbf{0},$$
that is,
$$\frac{z^*}{K}- \frac{[(-\nabla)^2]^{-1}(-\hat{v}_1^*+z^*)}{\gamma}=0, \text{ in } \Omega.$$
Thus,
$$\hat{v}_1^*=\gamma(\nabla^2)\left( \frac{z^*}{K}\right) +z^*,$$
so that
\begin{eqnarray}
J_K^*(v_0^*, \hat{v}_1^*,z^*) &=& J_1^*(v_0^*,z^*,K) \nonumber \\ &=&
\frac{1}{2} \int_\Omega \frac{(z^*)^2}{K}\;dx- \frac{\gamma}{2}\int_\Omega \left|\nabla \left( \frac{z^*}{K} \right) \right|^2\;dx \nonumber \\
&&- \frac{1}{2} \int_\Omega \frac{\left( \gamma \nabla^2\left(\frac{z^*}{K}\right)+z^*+f\right)^2}{2v_0^*+K}\;dx-\frac{1}{2 \alpha} \int_\Omega (v_0^*)^2\;dx
\nonumber \\ && -\beta \int_\Omega v_0^*\;dx.
\end{eqnarray}
Specifically, for $$K=K(v_0^*)=-2v_0^*+\varepsilon,$$ we obtain
\begin{eqnarray}
J_K^*(v_0^*, \hat{v}_1^*,z^*) &=& J_1^*(v_0^*,z^*,K) \nonumber \\ &=& J_2^*(v_0^*,z^*) \nonumber \\ &=&
\frac{1}{2} \int_\Omega \frac{(z^*)^2}{-2v_0^*+\varepsilon}\;dx- \frac{\gamma}{2}\int_\Omega \left|\nabla \left( \frac{z^*}{-2v_0^*+\varepsilon} \right) \right|^2\;dx \nonumber \\
&&- \frac{1}{2\varepsilon} \int_\Omega \left( \gamma \nabla^2\left(\frac{z^*}{-2v_0^*+ \varepsilon}\right)+z^*+f\right)^2\;dx-\frac{1}{2 \alpha} \int_\Omega (v_0^*)^2\;dx
\nonumber \\ && -\beta \int_\Omega v_0^*\;dx.
\end{eqnarray}
Finally, defining $$\hat{u}= \frac{z^*}{-2v_0^*+\varepsilon},$$
that is,
$$z^*=(-2v_0^*+\varepsilon) \hat{u},$$ we obtain
\begin{eqnarray}
J_2^*(v_0^*,z^*)&=& J_3^*(v_0^*, \hat{u}) \nonumber \\ &=&
-\frac{\gamma}{2} \int_\Omega \nabla \hat{u} \cdot \nabla \hat{u} \;dx- \frac{1}{2}\int_\Omega (2v_0^*- \varepsilon) \hat{u}^2\;dx \nonumber \\
&&- \frac{1}{2\varepsilon} \int_\Omega (\gamma \nabla^2 \hat{u} +(-2v_0^*+\varepsilon) \hat{u}+f)^2\;dx-\frac{1}{2 \alpha} \int_\Omega (v_0^*)^2\;dx
\nonumber \\ && -\beta \int_\Omega v_0^*\;dx.
\end{eqnarray}
Now, from $\delta J(u_0)= \mathbf{0}$ we have,
$$\gamma \nabla^2 u_0+(-2 \tilde{v}_0^*+\varepsilon)u_0+f-\varepsilon u_0=0,$$
$\text{ in } \Omega,$ so that
\begin{eqnarray} u_0 &=& \frac{ \gamma \nabla^2 u_0+(-2\tilde{v}_0^*+\varepsilon) u_0+f}{\varepsilon} \nonumber \\ &\equiv& u_1.
\end{eqnarray}

From this and the variation of $J_3^*$ in $\hat{u}$ at $(\tilde{v}_0^*,u_0)$, we obtain
\begin{eqnarray}
\frac{\partial J_3^*(\tilde{v}_0^*, u_0)}{\partial \hat{u}}&=& \gamma \nabla^2 u_0+(-2 \tilde{v}_0^*+\varepsilon)u_0 \nonumber \\ &&
-\gamma \nabla^2 u_1-(-2 \tilde{v}_0^*+\varepsilon)u_1 \nonumber \\ &=& 0, \text{ in }\Omega.\end{eqnarray}

Moreover, from the variation of $J_3^*$ in $v_0^*$ at $(\tilde{v}_0^*,u_0)$ we have,
\begin{eqnarray}
\frac{\partial J_3^*(\tilde{v}_0^*, u_0)}{\partial v_0^*}&=& -\frac{\tilde{v}_0^*}{\alpha}-u_0^2+ 2 u_1u_0- \beta \nonumber \\ &=&
-\frac{\tilde{v}_0^*}{\alpha}+u_0^2- \beta  \nonumber \\ &=& 0, \text{ in } \Omega, \end{eqnarray}
since $$\tilde{v}_0^*=\alpha (u_0^2-\beta), \text{ in } \Omega.$$

From these last two results we may infer that
$$\delta J_3^*(\tilde{v}_0^*, u_0)=\mathbf{0}.$$

Finally,
\begin{eqnarray}
-\delta_{\hat{u} \hat{u}}^2J^*(v_0^*,u_0)&=& \gamma \nabla^2-(2v_0^*-\varepsilon )I_d \nonumber \\
&& +\frac{(\gamma \nabla^2-(2v_0^*-\varepsilon )I_d)^2}{\varepsilon} > \mathcal{O}\left(\frac{1}{\sqrt{\varepsilon}}\right).
\end{eqnarray}

Also, $$-\delta^2_{v_0^* v_0^*} J_3^*(\tilde{v}_0^*,u_0)= \frac{1}{\alpha}+ \frac{4 u_0^2}{\varepsilon}.$$

Finally,
$$\delta_{v_0^*, \hat{u}}^2J_3^*(\tilde{v}_0^*,u_0) =4u_0 -\frac{2 f}{\varepsilon}.$$

Since $$f u_0 \geq 0 \text{ in } \Omega,$$ we may compute,
$$det[ \delta^2 J_3^*(\tilde{v}_0^*,u_0)] > \mathcal{O}\left( \frac{1}{\sqrt{\varepsilon}}\right),$$ so that
$(\tilde{v}_0^*,u_0)$ is a point of local maximum for $J_3^*(v_0^*,\hat{u}).$

Therefore, there exists $r>0$ such that
\begin{equation}\label{J1}J_3^*(\tilde{v}_0^*,u_0)=\max_{(v_0^*\hat{u}) \in B_r(\tilde{v}_0^*,u_0)} J_3^*(v_0^*, \hat{u}).\end{equation}

Moreover, from the Legendre transform transform proprieties we obtain
\begin{eqnarray}
J(u_0)&=&J_3^*(\tilde{v}_0^*,u_0) \nonumber \\ &=& J_K^*(\tilde{v}_0^*, \hat{v}_1^*(u_0),z^*(u_0)),\end{eqnarray}

where $$z^*(u_0)=(-2\tilde{v}_0^*+\varepsilon)u_0,$$
and
$$\hat{v}_1^*(u_0)=\gamma \nabla^2 \left( \frac{z^*(u_0)}{-2\tilde{v}_0^*+\varepsilon} \right)+z^*(u_0).$$

Therefore,
\begin{eqnarray}
J(u_0)&=& J_K^*(\tilde{v}_0^*,\hat{v}_1(u_0),z^*(u_0)) \nonumber \\ &=&
\frac{1}{2}\int_\Omega \frac{(z_0^*(u_0))^2}{K}\;dx-G_{1K}^*(\tilde{v}_0^*, \hat{v}_1^*(u_0)) \nonumber \\ &&-
G_0^*(\hat{v}_1^*(u_0),z^*(u_0)), \end{eqnarray}
so that
\begin{eqnarray}
J(u_0) &\leq& \frac{1}{2} \int_\Omega K u_0^2\;dx- \langle u, z_0^*(u_0)\rangle_{L^2}
\nonumber \\ &&+\frac{1}{2}\int_\Omega (2\tilde{v}_0^*+K) u^2\;dx - \frac{1}{2\alpha} \int_\Omega (\tilde{v}_0^*)^2\;dx
\nonumber \\ &&-\beta \int_\Omega \tilde{v}_0^*\;dx+\frac{\gamma}{2}\int_\Omega \nabla u \cdot \nabla u\;dx  \nonumber \\ &&
-\langle u,f \rangle_{L^2}, \forall u \in U,
\end{eqnarray}
and thus,
\begin{eqnarray}
J(u_0) &\leq& \frac{1}{2} \int_\Omega K u_0^2\;dx- \langle u, K u_0\rangle_{L^2}
\nonumber \\ &&+\sup_{ v_0^* \in Y^*}\left\{\frac{1}{2}\int_\Omega (2v_0^*+K) u^2\;dx - \frac{1}{2\alpha} \int_\Omega (v_0^*)^2\;dx\right.
\nonumber \\ &&\left.-\beta \int_\Omega v_0^*\;dx\right\}+\frac{\gamma}{2}\int_\Omega \nabla u \cdot \nabla u\;dx  \nonumber \\ &&
-\langle u,f \rangle_{L^2} \nonumber \\ &=& \frac{1}{2} \int_\Omega K (u-u_0)^2\;dx +\frac{\gamma}{2}\int_\Omega \nabla u \cdot \nabla u\;dx \nonumber \\ && +\frac{\alpha}{2} \int_\Omega (u^2-\beta)^2\;dx-\langle u,f \rangle_{L^2}, \forall u \in U.
\end{eqnarray}
Hence,
$$J(u_0) \leq J(u)+\frac{1}{2} \int_\omega K (u-u_0)^2\;dx, \forall u \in U,$$
where $$K=K(\tilde{v}_0^*)=-2\tilde{v}_0^* +\varepsilon.$$

From this,  (\ref{J1}) and from $$J(u_0)=J_3^*(\tilde{v}_0^*,u_0)$$ we have,

\begin{eqnarray}J(u_0)&=&\min_{u \in U}\left\{J(u)+\frac{1}{2}\int_\Omega K(\tilde{v}_0^*)(u-u_0)^2\;dx \right\}
\nonumber \\ &=& \max_{(v_0^*, \hat{u}) \in B_r(\tilde{v}_0^*,u_0)} J_3^*(v_0^*,\hat{u}) \nonumber \\ &=&
J_3^*(\tilde{v}_0^*, u_0).
\end{eqnarray}
The proof is complete.

\end{proof}

\section{Conclusion} In the present work, firstly we have developed a duality principle  applied to a Ginzburg-Landau type system.
As an important feature of such result,  for the Hessian determinant (and Frech\'{e}t second derivatives) about the critical point in question, we have $$\det [\delta_{v_0^*,z^*}^2 \tilde{J}(v_0^*,z^*)]> \mathcal{O}\left(\frac{1}{\varepsilon^{3/2}}\right),$$ where $\varepsilon>0$ is a small parameter.

In a second step, we present another duality principle and related primal dual formulation, which seems to be very interesting from a computational point of view.

Finally, we emphasize the results obtained are applicable to a large class of problems, including models of plates, shells and other models in elasticity.

\end{document}